\theoremstyle{plain}
\newtheorem{theorem}{Theorem}
\theoremstyle{definition}
\newtheorem{definition}[theorem]{Definition}
\newtheorem{example}[theorem]{Example}
\newcommand{\ff}[1]{{\underline{#1}}}
\newcommand{\Po}{\operatorname{Po}}
\newcommand{\Multi}{\operatorname{Multi}}
\newcommand{\NegMulti}{\operatorname{NegMulti}}
\newcommand{\Bin}{\operatorname{Bin}}
\newcommand{\NegBin}{\operatorname{NegBin}}
\newcommand{\Disp}{\operatorname{Disp}}
\newcommand{\Var}{\operatorname{Var}}
\newcommand{\DispM}{\operatorname{\mathsf{Disp}}}
\newcommand{\VarM}{\operatorname{\mathsf{Var}}}
\newcommand{\Cov}{\operatorname{Cov}}
\newcommand{\NN}{\operatorname{N}}
\newcommand{\Herm}{\operatorname{Herm}}
\newcommand{\multiset}[2]{\bigg(\kern-0.4em\dbinom{#1}{#2}\kern-0.4em\bigg)}
\newcommand{\tmultiset}[2]{\big(\kern-0.2em\tbinom{#1}{#2}\kern-0.2em\big)}
\begin{document}

\title{Marking and re-marking}
\author{Matthew Aldridge}

\maketitle

\begin{abstract}
A random number of items each independently marked with one of a collection of colours gives rise to the multinomial marking, which generalises binomial thinning. A multivariate version, where previously marked items are then re-marked, has similar properties to taking a linear transformation of a random vector.
\end{abstract}

\section{Multinomial marking} \label{sec:marking}

\subsection{Introduction and definitions} \label{sec:intro}

Suppose I have a random number $X$ of balls. Each ball is independently painted red with probability $r \in (0,1)$ or blue with probability $b = 1 - r$. Let $R$ be the number of red balls and $B$ the number of blue balls. Are $R$ and $B$ positively correlated, negatively correlated, or uncorrelated? Every ball painted red is a missed opportunity to paint it blue, and vice versa, which might suggest negative correlation. But the original number of unpainted balls was random, so if we see lots of red balls, that suggests there were probably lots of unpainted balls to start with, so there should be lots of blue balls too, which might suggest positive correlation. Which is true?

Answering this question requires studying \emph{marking}. Suppose we have a fixed number $n$ of objects, each of which is marked by one of $c$ colours: colour $1$ with probability $a_1$, colour $2$ with probability $a_2$, and so on. If $Y_i$ is the number of objects marked by colour $i$, then $\mathbf Y = (Y_i)$ is said to follow the \emph{multinomial distribution}. If the number of objects is not a fixed number $n$ but rather a random number $X$, then we call $\mathbf Y$ a \emph{multinomial marking} of $X$. 

In Section \ref{sec:marking} we study this multinomial marking. In this subsection, Subsection \ref{sec:intro}, we formally define marking; in Subsection \ref{sec:ex} we see some examples of markings of probability distributions; then in Subsection \ref{sec:prop} we examine some of the properties of marking and answer our opening question about red and blue balls. The concept of marking can be extended to taking some already marked objects and then giving them a new `re-marking'. We study this re-marking in Section \ref{sec:remarking}, where we see that this has similar properties to taking a linear transformation $\mathsf A \mathbf X$ of a real-valued random variable $\mathbf X$.

Multinomial marking can be considered as a multivariate extension of binomial thinning. In \emph{thinning}, each item is independently kept with probability $a$ or discarded with probability $1 - a$. 
Work that has considered thinning of random variables includes \cite{renyi,rao,steutel1,HJK,townes,aldridge} and much more.

Some notation. The non-negative integers are $\mathbb N = \{0, 1, 2, \dots\}$. For a vector $\mathbf y = (y_1, y_2, \dots, y_c) \in \mathbb R^c$, we write $|\mathbf y| = \sum_{i=1}^c y_i$ for the weight of $\mathbf y$. We use Knuth's notation
\[ x^{\ff{k}} = x(x-1) \cdots (x-k+1) \]
for the falling factorial. The binomial and multinomial coefficients are
\[ \binom{n}{k} = \frac{n^{\ff{k}}}{k!} \qquad \binom{n}{\mathbf k} = \frac{n^{\ff{|\mathbf k|}}}{\prod_{i=1}^c k_i!} .\]

\begin{definition}
Let $n \in \mathbb N$ and $p \in [0,1]$. A one-dimensional random variable $Y$ taking values in $\mathbb N$ with probability mass function
\[ p(y) = \binom{n}{y} p^{y} (1-p)^{n-y}  \]
is said to be a \emph{binomial distribution} with parameters $n$ and $p$. We write $Y \sim \Bin(n, p)$.

Let $n \in \mathbb N$ and $\mathbf p \in [0,1]^c$ with $|\mathbf p| \leq 1$. A $c$-dimensional random variable $\mathbf Y$ taking values in $\mathbb N^c$ with probability mass function
\[ p(\mathbf y) = \binom{n}{\mathbf y} \prod_{i=1}^c p_i^{y_i} \, \big(1 - |\mathbf p|\big)^{n-| \mathbf y|}  \]
is said to be a \emph{multinomial distribution} with parameters $n$ and $\mathbf p$. We write $\mathbf Y \sim \Multi(n, \mathbf p)$.

Let $X$ be a random variable taking values in $\mathbb N$ and $a \in [0,1]$. Consider the one-dimensional random variable $Y$ taking values in $\mathbb N$ whose conditional distribution given $X$ is $\Bin(X,a)$, so its probability mass function is
\[ p(y) = \mathbb E \left(\binom{X}{y} a^{y} (1-a)^{X-y}\right) . \]
We call $Y$ the \emph{binomial thinning} (or just \emph{thinning}) of $X$ with parameter $a$, and write $Y = a \circ X$.

Let $X$ be a random variable taking values in $\mathbb N$ and $\mathbf a \in [0,1]^c$ with $|\mathbf a| \leq 1$.  Consider the $c$-dimensional random variable $\mathbf Y$ taking values in $\mathbb N^c$ whose conditional distribution given $X$ is $\Multi(X,\mathbf a)$, so its probability mass function is
\[ p(\mathbf y) = \mathbb E \left(\binom{X}{\mathbf y} \prod_{i=1}^c a_i^{y_i} \, \big(1 - |\mathbf a|\big)^{X-| \mathbf y|} \right) . \]
We call $\mathbf Y$ the \emph{multinomial marking} (or just \emph{marking}) of $X$ with parameters $\mathbf a = (a_i)$, and write $\mathbf Y = \mathbf a \circ X$.
\end{definition}

We have chosen to define the multinomial distribution for $|\mathbf p| \leq 1$, so $|\mathbf Y| \leq n$; we can think of this as marking each ball colour $i$ with probability $p_i$ or discarding the ball with probability $1 - |\mathbf p|$, so \emph{at most} $n$ balls remain. An alternative, stricter convention could require $|\mathbf p| = 1$, so $|\mathbf Y| = n$; we could think of this as every ball being coloured and none being discarded, so all $n$ balls remain. We have preferred the former convention, since setting the number of colours to be $c = 1$ in the multinomial distribution then exactly recovers the binomial distribution, and setting $c = 1$ in multinomial marking exactly recovers binomial thinning. While this is convenient, however, there is no real loss from just thinking of the $|\mathbf p| = 1$ convention.

We note immediately one connection between marking and thinning: the marginal distribution of $i$th coordinate $Y_i$ of a marking $\mathbf Y = \mathbf a \circ X$ is the same as the distribution of a thinning $a_i \circ X$.  The extra interest is in that the coordinates of $\mathbf a \circ X$ are not (in general) independent.

\subsection{Examples} \label{sec:ex}

The following are examples of markings of famous probability distributions. While these are sometimes not too difficult to prove directly, it is usually more convenient to use a generating function approach, so we omit direct proofs for now, and give generating function proofs in Example \ref{ex:genfun}.


\begin{example} \label{ex:bin}
Let $X \sim \Bin(n, p)$ be a binomial distribution. Then  the marking $\mathbf a \circ X \sim \Multi(n, p\mathbf a)$ is a multinomial distribution.
\end{example}

\begin{example} \label{ex:po}
Let $X \sim \Po(\lambda)$ be a Poisson distribution. Then  the marking $\mathbf Y =\mathbf a \circ X \sim \Po(\lambda \mathbf a)$ is a multivariate Poisson distribution, in that each $Y_i \sim \Po(\lambda a_i)$ is a Poisson distribution and all the $Y_i$s are independent.
\end{example}

\begin{example} \label{ex:negbin}
Let $X \sim \NegBin(n, q)$ be a negative binomial distribution with failure probability $q$, so with probability mass function
\[ p_X(x) = \binom{n+x-1}{x} q^x \,(1-q)^n . \]
Then  the marking $\mathbf Y =\mathbf a \circ X \sim \NegMulti(n, \mathbf q)$ is a negative multinomial distribution with failure probabilities $\mathbf q =(q_i)$, so with probability mass function
\[ p_{\mathbf Y}(\mathbf y) = \binom{n+|\mathbf y| -1}{\mathbf y} \prod_{i=1}^c q_i^{y_i} \,\big(1-|\mathbf q|\big)^n  , \]
and the relationship between the new failure probabilities $q_i$ and the original failure probability $q$ is
\[ \frac{q_i}{1 - q_i} = a_i \, \frac{q}{1-q} . \]
\end{example}

\begin{example} \label{ex:herm}
Let $X \sim \Herm(\mu, \sigma^2)$ be an Hermite distribution \cite{kemp1,kemp2}, so $X = U + 2V$, where $U \sim \Po(\alpha)$ and $V \sim \Po(\beta)$ are independent, and the distribution is parametrised by the mean $\mu = \alpha + 2\beta$ and the dispersion $\sigma^2 = 2\beta$. Then the marking $\mathbf Y = \mathbf a \circ X\sim \Herm(\boldsymbol\mu, \mathsf\Sigma)$ is a multivariate Hermite distribution \cite{steyn}, so \[ Y_i = U_i + \sum_{j=1}^c V_{ij} + \sum_{k=1}^c V_{ki} , \] where $U_i \sim \Po(\alpha_i)$ and $V_{ij} \sim \Po(\beta_{ij})$ are all independent, and the distribution is parametrised by the means $\mu_i = \alpha_i + \sum_{j} \beta_{ij} + \sum_{k} \beta_{ki}$, and the dispersion--covariance matrix $\mathsf\Sigma = (\sigma_{ij})$, where the dispersions are $\sigma_{ii} = 2\beta_{ii}$ and covariances are $\sigma_{ij} = \beta_{ij} + \beta_{ji}$ for $j \neq i$. The relationships between the original parameters $\mu$, $\sigma^2$ and the new parameters $\mu_i$, $\sigma_{ij}$ are $\mu_i = \mu a_i$, $\sigma_{ij} = \sigma^2 a_i a_j$, or $\boldsymbol \mu = \mu \mathbf a$, $\mathsf\Sigma = \sigma^2 \mathbf{a} \mathbf{a}^\top$.
\end{example}

\subsection{Properties} \label{sec:prop}

We now discuss some properties of marked distributions. To do this, we will need a little more notation. The $k$th factorial moment is
\[ \operatorname{\mathbb E} X^{\ff{k}} = \operatorname{\mathbb E} X(X-1) \cdots (X-k+1) , \]
and the multivariate generalisation is
\[ \operatorname{\mathbb E} \mathbf Y^{\ff{\mathbf k}} = \mathbb E \prod_{i=1}^c Y_i^{\ff{k_i}} . \]
Our results will be much cleaner if we measure the spread of a random variable not by the variance $\Var(X) = \mathbb EX^2 - (\mathbb EX)^2$, but rather by the dispersion \cite{JK}
\[ \Disp(X) = \mathbb EX^{\ff{2}} - (\mathbb EX)^2 = \Var(X) - \mathbb EX .\]
The covariance is the usual definition $\Cov(X,Y) = \mathbb EXY - (\mathbb EX)(\mathbb EY)$.
The factorial moment generating function (FMGF) \cite[Section 1.2.7]{UDD} is $\Phi_X(t) = \mathbb E(1+t)^X$ and the multivariate FMGF is
\[ \Phi_{\mathbf Y}(\mathbf t) = \mathbb E \prod_{i=1}^c (1 + t_i)^{Y_i} ; \]
these are related to the more common (but here less convenient) probability generating function by $\Phi_X(t) = G_X(1 + t)$ and $\Phi_{\mathbf Y}(\mathbf t) = G_{\mathbf Y}(\mathbf 1 + \mathbf t)$, where $\mathbf 1$ is the all-$1$s vector. We write the scalar product as
$\mathbf a ^\top \mathbf x = a_1x_2 + \cdots a_c x_c$.

\begin{theorem} \label{th:prop}
Let $X$ be a random variable taking values in $\mathbb N$, $\mathbf a \in [0,1]^c$ with $|\mathbf a| \leq 1$, and let $\mathbf Y = \mathbf a \circ X$ be the multinomial marking of $X$ with parameters $\mathbf a$. Then we have the following.
\begin{enumerate}
\item $\mathbb EY_i = a_i \, \mathbb EX$.
\item $\Disp(Y_i) = a_i^2 \Disp(X)$.
\item $\Cov(Y_i, Y_j) = a_i a_j \Disp(X)$ for $i \neq j$.
\item ${\displaystyle \operatorname{\mathbb E} \mathbf Y^{\ff{\mathbf k}} = \left(\prod_{i=1}^d a_i^{k_i}\!\right) \operatorname{\mathbb E} X^{\ff{| \mathbf k|}}}$.
\item $\Phi_{\mathbf Y}(\mathbf t) = \Phi_X\big(\mathbf a ^\top \mathbf t\big)$.
\end{enumerate}
\end{theorem}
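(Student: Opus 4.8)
The plan is to treat parts (4) and (5) as the two substantive identities and to obtain the moment formulas (1)--(3) as immediate specialisations of (4). Everything rests on a single computation: the conditional behaviour of $\mathbf Y$ given $X$, which by definition is $\Multi(X, \mathbf a)$. First I would record the conditional factorial moment generating function. Multiplying the multinomial probability mass function by $\prod_i (1+t_i)^{y_i}$, combining $a_i^{y_i}(1+t_i)^{y_i} = (a_i(1+t_i))^{y_i}$, and applying the multinomial theorem gives
\[ \sum_{\mathbf y} \binom{n}{\mathbf y} \prod_{i=1}^c \big(a_i(1 + t_i)\big)^{y_i} \big(1 - |\mathbf a|\big)^{n - |\mathbf y|} = (1 + \mathbf a^\top \mathbf t)^n , \]
where the collapse uses $\sum_{i=1}^c a_i(1 + t_i) + \big(1 - |\mathbf a|\big) = 1 + \mathbf a^\top \mathbf t$. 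In other words, conditional on $X = n$ we have $\mathbb E \prod_i (1+t_i)^{Y_i} = (1 + \mathbf a^\top \mathbf t)^n$. This single identity is the engine of the whole theorem.

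Part (5) is then a one-line application of the tower property: conditioning on $X$ and using the display gives $\Phi_{\mathbf Y}(\mathbf t) = \mathbb E (1 + \mathbf a^\top \mathbf t)^X = \Phi_X(\mathbf a^\top \mathbf t)$, exactly the claim. For part (4) I would differentiate: the factorial moment $\mathbb E \mathbf Y^{\ff{\mathbf k}}$ is the mixed partial $\prod_i \partial_{t_i}^{k_i}$ of $\Phi_{\mathbf Y}$ evaluated at $\mathbf t = \mathbf 0$, and each application of $\partial_{t_i}$ to $(1 + \mathbf a^\top \mathbf t)^m$ produces a factor $a_i$ while lowering the exponent by one. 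Carrying out $|\mathbf k|$ such differentiations pulls out $\prod_i a_i^{k_i}$ and leaves, by the chain rule, the $|\mathbf k|$th derivative of $\Phi_X$ at $0$, which equals $\mathbb E X^{\ff{|\mathbf k|}}$. Equivalently, and perhaps more transparently, one can compute the conditional multinomial factorial moment directly as $\mathbb E\big(\mathbf Y^{\ff{\mathbf k}} \mid X\big) = X^{\ff{|\mathbf k|}} \prod_i a_i^{k_i}$ and then take the expectation over $X$.

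Parts (1)--(3) are specialisations of (4). Writing $\mathbf e_i$ for the $i$th standard basis vector, the choice $\mathbf k = \mathbf e_i$ gives $\mathbb E Y_i = a_i \, \mathbb E X$, which is (1). The choice $\mathbf k = 2\mathbf e_i$ gives $\mathbb E Y_i^{\ff 2} = a_i^2 \, \mathbb E X^{\ff 2}$, and subtracting $(\mathbb E Y_i)^2 = a_i^2 (\mathbb E X)^2$ yields $\Disp(Y_i) = a_i^2 \Disp(X)$, which is (2). For $i \neq j$ the choice $\mathbf k = \mathbf e_i + \mathbf e_j$ gives $\mathbb E Y_i Y_j = a_i a_j \, \mathbb E X^{\ff 2}$, since each of the two falling factorials has a single factor; subtracting $\mathbb E Y_i \, \mathbb E Y_j = a_i a_j (\mathbb E X)^2$ gives $\Cov(Y_i, Y_j) = a_i a_j \Disp(X)$, which is (3).

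The work here is bookkeeping rather than conceptual: the only genuine step is the multinomial-theorem collapse in the first display, and the one thing to watch is the interchange of expectation with summation and differentiation in the tower-property steps. These are benign, since the relevant series are non-negative near $\mathbf t = \mathbf 0$; and if $\mathbb E X^{\ff{|\mathbf k|}}$ happens to be infinite, the factorial-moment identity (4) simply holds with both sides infinite, so no integrability hypothesis on $X$ is required.
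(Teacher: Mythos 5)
Your proof is correct and follows essentially the same route as the paper: establish part (5) by conditioning on $X$ and using the FMGF of the multinomial distribution, then extract parts (1)--(4) as factorial moments via differentiation at $\mathbf t = \mathbf 0$. The only differences are cosmetic --- you prove the multinomial FMGF collapse explicitly via the multinomial theorem (the paper quotes it as known) and you derive (1)--(3) as specialisations of (4) rather than by separate differentiations, which is the same computation in different packaging.
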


Note how these properties generalise standard properties of binomial thinning \cite[Section 2.1]{aldridge}: If $Y = a \circ X$, then $\mathbb EY = a \, \mathbb EX$, $\Disp(Y) = a^2 \Disp(X)$, $\operatorname{\mathbb E} Y^{\ff{k}} = a^k \operatorname{\mathbb E} X^{\ff{k}}$, and $\Phi_Y(t) = \Phi_X(at)$.

Note also that our use of the FMGF simplifies part 5 compared to using the more standard probability generating function, which would require the more awkward
\[ G_\mathbf Y(\mathbf t) = G_X \big(1 - |\mathbf a|+ \mathbf a^\top\mathbf t \big) . \]

\begin{proof}[Proof of Theorem \ref{th:prop}]
The key is part 5. We have
\[ \Phi_{\mathbf Y} (\mathbf t) = \mathbb E\prod_{i=1}^c (1 + t_i)^{Y_i} = \mathbb E\,\mathbb E\left( \left.\prod_{i=1}^c (1 + t_i)^{Y_i} \;\right|\; X \right) . \]
The inner conditional expectation is the FMGF of a multinomial $\Multi(X, \mathbf a)$ distribution, which is $(1 + \mathbf a^\top \mathbf t)^X$; hence,
\[ \Phi_{\mathbf Y} (\mathbf t) = \mathbb E \big(1 + \mathbf a^\top \mathbf t\big)^X = \Phi_X\big(\mathbf a^\top \mathbf t\big). \]

Part 1 to 4 follow from this, since the generating function forms of the FMGFs
\[ \Phi_X(t) = \sum_{k=0}^\infty \mathbb EX^{\ff k} \; \frac{t^k}{k!} \qquad \Phi_\mathbf Y(\mathbf t) = \sum_{k_1, \dots, k_c} \operatorname{\mathbb E} \mathbf X^{\ff{\mathbf{k}}} \; \frac{t_1^{k_1}\cdots t_c^{k_c} }{k_1! \cdots k_c!} \]
allow one to find the factorial moments by differentiating the FMGF an appropriate number of times and setting $t = 0$.
For part 1, we have
\[  \mathbb EY_i = \frac{\partial}{\partial t_i} \Phi_\mathbf Y(\mathbf t) \Big|_{\mathbf t = \mathbf 0} = \frac{\partial}{\partial t_i} \Phi_X \big(\mathbf a^\top \mathbf t\big) \Big|_{\mathbf t = \mathbf 0} = a_i \, \frac{\mathrm d}{\mathrm d t} \Phi_X(t)\Big|_{t = 0} = a_i \, \mathbb EX . \]
For parts 2 and 3, the same method with double differentiation gives
\begin{align*}
\mathbb EY_i(Y_i-1) =  \frac{\partial^2}{\partial t_i^2} \Phi_\mathbf Y(\mathbf t) \Big|_{\mathbf t = \mathbf 0} &= a_i^2 \; \mathbb EX(X-1) \\
\mathbb EY_iY_j =  \frac{\partial^2}{\partial t_i\, \partial t_j} \Phi_\mathbf Y(\mathbf t) \Big|_{\mathbf t = \mathbf 0} &= a_ia_j \, \mathbb EX(X-1) ,
\end{align*}
and the results for the dispersions and covariances follow. Part 4 is the same, but with multiple differentiation according to the vector $\mathbf k$.
\end{proof}

\begin{example} \label{ex:genfun}
We can now prove the examples from Section \ref{sec:ex} very cleanly using the factorial moment generating function (FMGF).
\begin{itemize}
  \item The FMGF of $X \sim \Bin(n,p)$ is $\Phi_X(t) = (1+pt)^n$, while the FMGF of $\mathbf Y \sim \Multi(n, \mathbf p)$ is $\Phi_{\mathbf Y}(\mathbf t) = (1 + \mathbf p ^\top\mathbf t)^n$. This proves Example \ref{ex:bin}.
  \item The FMGF of $X \sim \Po(\lambda)$ is $\Phi_X(t) = \mathrm{e}^{\lambda t}$, while the FMGF of $\mathbf Y \sim \Po(\boldsymbol\lambda)$ is $\Phi_{\mathbf Y}(\mathbf t) = \mathrm{e}^{\boldsymbol\lambda{\!}^\top  \mathbf t}$. This proves Example \ref{ex:po}.
  \item The FMGFs of $X \sim \NegBin(n,q)$ and $\mathbf Y \sim \NegMulti(n, \mathbf q)$ are
  \[ \Phi_X(t) = \left(1+\frac{q}{1-q}\,t\right)^{\!\!-n} \qquad \Phi_{\mathbf Y}(\mathbf t) = \left(1+\sum_{i=1}^d \frac{q_i}{1-q_i}\,t_i\right)^{\!\!-n} . \]
  This proves Example \ref{ex:negbin}.
  \item The FMGF of $X \sim \Herm(\mu, \sigma^2)$ is $\Phi_X(t) = \exp(\mu t + \tfrac12 \sigma^2 t^2)$, while the FMGF of $\mathbf Y \sim \Herm(\boldsymbol\mu, \mathsf\Sigma)$ is $\Phi_{\mathbf Y}(\mathbf t) = \exp(\boldsymbol\mu^\top \mathbf t + \tfrac12 \mathbf t^\top \mathsf\Sigma \mathbf t)$. This proves Example \ref{ex:herm}.
\end{itemize}
\end{example}

We have seen from Example \ref{ex:po} that if $X \sim \Po(\lambda)$ is a Poisson distribution then the coordinates of the marking $\mathbf a \circ X$ are independent. In fact, this is the only case in which such independence occurs.

\begin{theorem}
Let $X$ be a random variable taking values in $\NN$, and let $\mathbf Y = \mathbf a \circ X$ where the marking parameters $\mathbf a = (a_i) \in [0,1]^c$ satisfy $a_i > 0$ for all $i$. Then the coordinates $Y_i$ of $\mathbf Y$ are independent if and only if $X$ is a Poisson distribution.
\end{theorem}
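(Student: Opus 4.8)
The plan is to translate independence into a multiplicative functional equation for the factorial moment generating function $\Phi_X$ and then to recognise that equation as Cauchy's exponential equation, whose only regular solution is an exponential. Throughout I assume $c \geq 2$, since for $c = 1$ there is a single coordinate and independence is vacuous. The ``if'' direction is already available: if $X \sim \Po(\lambda)$ then Example \ref{ex:po} shows the coordinates of $\mathbf Y = \mathbf a \circ X$ are independent. So all the work is in the ``only if'' direction. First I would note that independence of the $Y_i$ is equivalent to the factorisation of the joint probability generating function, $G_{\mathbf Y}(\mathbf z) = \prod_i G_{Y_i}(z_i)$ (the joint probabilities are the coefficients of $G_{\mathbf Y}$ at the origin), and hence, after the reparametrisation $\Phi_{\mathbf Y}(\mathbf t) = G_{\mathbf Y}(\mathbf 1 + \mathbf t)$, to the factorisation $\Phi_{\mathbf Y}(\mathbf t) = \prod_i \Phi_{Y_i}(t_i)$ of the FMGF. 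Now I invoke the machinery already in the paper: by Theorem \ref{th:prop}(5) we have $\Phi_{\mathbf Y}(\mathbf t) = \Phi_X(\mathbf a^\top \mathbf t)$, while the $i$th marginal is the thinning $a_i \circ X$, so $\Phi_{Y_i}(t_i) = \Phi_X(a_i t_i)$. Independence therefore says
\[ \Phi_X\Big(\textstyle\sum_{i=1}^c a_i t_i\Big) = \prod_{i=1}^c \Phi_X(a_i t_i). \]

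Because every $a_i > 0$, the substitution $s_i = a_i t_i$ is invertible, and letting each $t_i$ range over $[-1,0]$ makes each $s_i$ sweep out the nondegenerate interval $[-a_i,0]$; all arguments then stay in $[-1,0]$, where $\Phi_X(t) = G_X(1+t)$ certainly converges. Setting all but two of the $s_i$ to zero (and using $\Phi_X(0)=1$) leaves the two-variable Cauchy exponential equation $\Phi_X(s+s') = \Phi_X(s)\,\Phi_X(s')$ for $s \in [-a_1,0]$, $s' \in [-a_2,0]$. Since $\Phi_X$ is continuous on $[-1,0]$ (dominated convergence, as $(1+t)^X \in [0,1]$ there) and $\Phi_X(0)=1>0$, taking logarithms near $0$ reduces this to the additive Cauchy equation with a continuous solution, forcing $\Phi_X(s) = \ee^{\kappa s}$ on an interval; matching the linear coefficient gives $\kappa = \Phi_X'(0) = \mathbb E X =: \lambda \geq 0$. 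As $\Phi_X$ is analytic on the disc $|1+t|<1$ (the series $G_X$ has radius of convergence at least $1$), the identity theorem extends $\Phi_X(t) = \ee^{\lambda t}$ to the whole domain, so $G_X(z) = \ee^{\lambda(z-1)}$, the probability generating function of $\Po(\lambda)$; since the generating function determines the law, $X \sim \Po(\lambda)$.

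The main obstacle is the functional-equation step. I must ensure the set on which the identity holds is a genuine interval rather than something degenerate, so that Cauchy's equation really applies, and I must use the regularity of $\Phi_X$ — continuity already suffices, analyticity being merely cleaner — to exclude the pathological non-measurable solutions of the additive Cauchy equation. The positivity hypothesis $a_i > 0$ is precisely what guarantees the $s_i$ cover an interval around $0$; were some $a_i = 0$, the corresponding coordinate would be identically zero and the statement would need restating.
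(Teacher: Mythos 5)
Your proposal is correct and follows essentially the same route as the paper: use Theorem \ref{th:prop} part 5 and the thinning marginals to turn independence into the functional equation $\Phi_X(\mathbf a^\top \mathbf t) = \prod_i \Phi_X(a_i t_i)$, recognise it as Cauchy's exponential equation, and use continuity of the FMGF to force $\Phi_X(t) = \ee^{\lambda t}$, hence $X \sim \Po(\lambda)$. You supply more detail than the paper (the reduction to two variables, the logarithm step, the identity-theorem extension, and deducing $\lambda = \mathbb{E}X \geq 0$ directly rather than ruling out $\lambda < 0$ as the paper does), but the underlying argument is the same.
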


\begin{proof}
The `if' part is Example \ref{ex:po}; we have to prove the `only if'.

From Theorem \ref{th:prop} part 5, the FMGF of the marking $\mathbf Y$ is $\Phi_{\mathbf Y}(\mathbf t) = \Phi_X(\mathbf a^{\top}\mathbf t)$. We also know that each coordinate $Y_i = a_i \circ X$ is a thinning, so has FMGF $\Phi_{Y_i}(t) = \Phi_X(a_i t)$. So the condition for independence is that
\[ \Phi_{X}\big(\mathbf a^\top \mathbf t\big) = \prod_{i=1}^c \Phi_X(a_i t_i) , \]
which is Cauchy's exponential functional equation.
Since $\Phi$ is continuous and bounded on $(-2,0)$, the only solutions are $\Phi_X(t) = \mathrm{e}^{\lambda t}$. For $\lambda \geq 0$, this is the FMGF of the Poisson distribution $X \sim \Po(\lambda)$. For $\lambda < 0$ this does not define a valid FMGF, since (for example) the first derivative is negative at $t = -1$.
\end{proof}

\begin{example}
We now return to the example of red and blue balls from the beginning of this article. We have $X$ balls that are independently painted red with probability $r$ and blue with probability $b$. In the language of this article, the numbers of red and blue balls $(R, B)$ are a marking of $X$; that is,
\[ \begin{pmatrix} R \\ B \end{pmatrix} = \begin{pmatrix} r \\ b \end{pmatrix} \circ X . \]
By Theorem \ref{th:prop}, we have the following:
\begin{align*}
    \mathbb ER = r \, \mathbb EX \quad & \quad \qquad \!\!\!
    \mathbb EB = b \, \mathbb EX \\
    \Disp(R) &= r^2  \Disp(X) \\
    \Disp(B) &= b^2  \Disp(X) \\
    \Cov(R,B) &= rb  \Disp(X) .
\end{align*}
We pause to note how much more convenient these last three results are with our use of the dispersion $\Disp(X) = \Var(X) - \mathbb EX$ than had we used the variance:
\begin{align*}
\Var(R) &= r^2\Var(X) + r(1-r)\,\mathbb E X \\
\Var(B) &= b^2\Var(X) + b(1-b)\,\mathbb E X \\
\Cov(R, B) &= rb\Var(X) - rb\,\mathbb E X .
\end{align*}

We posed the question of whether $R$ and $B$ are positively correlated, negatively correlated, or uncorrelated. Since the correlation has the same sign as the covariance (and since we assumed $r, b \neq 0$), we immediately see the answer is that the correlation has the same sign as the dispersion $\Disp(X)$ of the number of balls:
\begin{itemize}
\item If the number of balls $X$ is underdispersed, in that its variance is smaller than its mean, or $\Disp(X) < 0$, then $R$ and $B$ are negatively correlated. This is because the possible number of balls is tightly constrained, so a ball painted red is a lost opportunity to paint one of the limited number of balls blue, and vice versa.
\item If the number of balls $X$ is overdispersed, in that its variance is larger than its mean, or $\Disp(X) > 0$, then $R$ and $B$ are positively correlated. This is because there is a wide range in the possible number of balls, so lots of red balls is a sign there were likely lots of unpainted balls to start with, so there will likely be lots of blue balls too, and the opposite if there are few red balls.
\item If the number of balls $X$ is equidispersed, in that its variance is equal to its mean, or $\Disp(X) = 0$, then $R$ and $B$ are uncorrelated. This includes the special case of having a Poisson number of balls $X \sim \Po(\lambda)$, which has mean and variance equal to $\lambda$, so dispersion $0$. Here, $R \sim \Po(r\lambda)$ and $B \sim \Po(b\lambda)$ are not just uncorrelated but are independent. For other equidispersed distributions, $R$ and $B$ will be uncorrelated but dependent.
\end{itemize}
\end{example}

\section{Multivariate re-marking} \label{sec:remarking}

Marking is always applied to a univariate discrete random variable $X$, representing a single collection of items. But we can also apply this idea to a collection of items $\mathbf X = (X_1, X_2, \dots, X_d)$, representing $X_1$ items already marked $1$, $X_2$ items already marked $2$, and so on. These items can then be `re-marked' -- that is, receive new marks $1, 2, \dots, c$. An item originally marked $j$ is re-marked $1$ with probability $a_{1j}$, re-marked $2$ with probability $a_{2j}$, and so on. The re-marking probabilities are convenient to keep track of in a matrix $\mathsf A = (a_{ij})$, where $i = 1, 2, \dots, c$ and $j = 1, 2, \dots, d$, so $\mathsf A$ is a $c \times d$ matrix whose columns $\mathbf a_j \in [0,1]^c$ each sum to at most $1$. Here, $c$ is the number of colourings in the new re-marking and $d$ the number of colours in the original marking; it can be simpler to just consider the case $c = d$. The re-marking will be denoted $\mathbf Y = \mathsf A \circ \mathbf X$.

\begin{definition}
Let $\mathbf X$ be a random variable taking values in $\mathbb N^d$. Let $\mathsf A = (a_{ij}) \in [0,1]^{c \times d}$ be a matrix where each column $\mathbf a_j \in [0,1]^c$ satisfies $|\mathbf a_j| \leq 1$. Define random variables $\mathbf Z^{(1)}, \dots, \mathbf Z^{(d)}$ as follows: the conditional distribution of $\mathbf Z^{(j)}$ given $\mathbf X$ is multinomial $\mathbf Z^{(j)} \sim \Multi(X_j, \mathbf a_j)$, and the $\mathbf Z^{(j)}$s are conditionally independent given $\mathbf X$. Write $\mathbf Y = \sum_{j=1}^d \mathbf Z^{(j)}$. Then $\mathbf Y$ is the \emph{multinomial re-marking} of $\mathbf X$ with parameters $\mathsf A$, written $\mathbf Y = \mathsf A \circ \mathbf X$.
\end{definition}

This definition is a little ungainly, but the idea is that $\mathbf Z^{(1)}$ is the re-marking of all the items originally marked $1$, $\mathbf Z^{(2)}$ the re-marking of the items originally marked $2$, and so on; and the total numbers of re-marked items are the sums of these summed over all the original markings.

The binomial thinning $a \circ X$ of a discrete random variable $X$ maintains many similar properties to a scaling $aX$ of a real-valued random variable $X$ \cite[Section 2.1]{aldridge}. Similarly, we will see that the re-marking $\mathsf A \circ \mathbf X$ of a discrete random vector $\mathbf X$ maintains many similar properties to a linear transformation of a real-valued random vector $\mathbf X$, as produced by a matrix multiplication $\mathsf A \mathbf X$.

We give just two examples of re-marking; these are easily proven using the FMGF using Theorem \ref{th:prop2} part 3, which follows in a moment.

\begin{example}
Let $\mathbf X \sim \Po(\boldsymbol\lambda)$ be a multivariate Poisson distribution, meaning the $X_i \sim \Po(\lambda_i)$ are independent. Then  the re-marking $\mathsf A \circ \mathbf X \sim \Po(\mathsf A \boldsymbol\lambda)$ is also a multivariate Poisson distribution.
\end{example}

\begin{example}
Let $\mathbf{X} \sim \Herm(\boldsymbol\mu, \mathsf \Sigma)$ be a multivariate Hermite distribution, as defined in Example \ref{ex:herm}. Then  the re-marking $\mathsf A \circ \mathbf X \sim \Herm(\mathsf A \boldsymbol\mu, \mathsf A \mathsf \Sigma \mathsf A^\top)$ is also a multivariate Hermite distribution.

This is the most interesting example in this article, since the result can be considered a discrete equivalent to the result that if $\mathbf X \sim \NN(\boldsymbol\mu, \mathsf \Sigma)$ is a multivariate normal distribution, then the linear transformation $\mathsf A \mathbf X \sim \NN(\mathsf A \boldsymbol\mu, \mathsf A \mathsf \Sigma \mathsf A^\top)$ is also a multivariate normal distribution: note that the parameters have changed in exactly the same way. This is further evidence for the contention that the Hermite distribution acts like a `discrete normal distribution'. See \cite[Section 3.3]{aldridge} for more on the Hermite distribution as a discrete equivalent to the normal distribution.
\end{example}

In the following theorem, we write $\DispM(\mathbf Y)$ for the dispersion--covariance matrix, whose diagonal entries are the dispersions $\Disp(X_i) = \Var(X_i) - \mathbb E X_i$ and off-diagonal entries are the covariances $\Cov(X_i, X_j)$.

\begin{theorem} \label{th:prop2}
Let $\mathbf X$ be a random variable taking values in $\mathbb N^c$, $\mathsf A \in [0,1]^{c\times d}$ with columns $|\mathbf a_i| \leq 1$, and let $\mathbf Y = \mathsf A \circ X$ be the multinomial re-marking of $\mathbf X$ with parameters $\mathsf A$. Then we have the following.
\begin{enumerate}
\item  $\mathbb E \mathbf Y = \mathsf A \,\mathbb E \mathbf X$.
\item $\DispM( \mathbf Y) = \mathsf A \DispM(\mathbf X)\, \mathsf A^\top$.
\item $\Phi_{\mathbf Y}(\mathbf t) = \Phi_{\mathbf X}(\mathsf A^\top \mathbf t)$.
\end{enumerate}
\end{theorem}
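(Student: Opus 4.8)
The plan is to mirror the proof of Theorem~\ref{th:prop}: establish the FMGF identity of part~3 first, then recover parts~1 and~2 by differentiating at $\mathbf t = \mathbf 0$. For part~3, I would begin from the definition $\mathbf Y = \sum_{j=1}^d \mathbf Z^{(j)}$ and regroup the defining product as
\[ \prod_{i=1}^c (1+t_i)^{Y_i} = \prod_{j=1}^d \prod_{i=1}^c (1+t_i)^{Z_i^{(j)}} . \]
Conditioning on $\mathbf X$ and invoking the conditional independence of the $\mathbf Z^{(j)}$, the conditional expectation factors over $j$; each factor is the FMGF of a $\Multi(X_j, \mathbf a_j)$ distribution, which by the calculation already used in the proof of Theorem~\ref{th:prop} equals $(1 + \mathbf a_j^\top \mathbf t)^{X_j}$. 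Since $\mathbf a_j^\top \mathbf t = (\mathsf A^\top \mathbf t)_j$, taking the outer expectation gives $\Phi_{\mathbf Y}(\mathbf t) = \mathbb E \prod_{j} \big(1 + (\mathsf A^\top \mathbf t)_j\big)^{X_j} = \Phi_{\mathbf X}(\mathsf A^\top \mathbf t)$ straight from the definition of the multivariate FMGF. This step is essentially mechanical once the regrouping and conditional independence are set up.

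Parts~1 and~2 then follow by the chain rule applied to $\Phi_{\mathbf X}(\mathsf A^\top \mathbf t)$, exactly as factorial moments were extracted in Theorem~\ref{th:prop}. For part~1, differentiating in $t_i$ produces $\sum_j a_{ij}\,\partial_{s_j}\Phi_{\mathbf X}(\mathsf A^\top \mathbf t)$, and evaluating at $\mathbf t = \mathbf 0$ with $\partial_{s_j}\Phi_{\mathbf X}(\mathbf 0) = \mathbb E X_j$ gives $\mathbb E Y_i = \sum_j a_{ij}\,\mathbb E X_j = (\mathsf A\,\mathbb E \mathbf X)_i$. For part~2, the second derivatives give $\mathbb E Y_i(Y_i-1) = \sum_{j,k} a_{ij}a_{ik}M_{jk}$ and, for $i\neq i'$, $\mathbb E Y_iY_{i'} = \sum_{j,k} a_{ij}a_{i'k}M_{jk}$, where $M_{jk} = \partial_{s_j}\partial_{s_k}\Phi_{\mathbf X}(\mathbf 0)$ equals $\mathbb E X_j(X_j-1)$ when $j=k$ and $\mathbb E X_jX_k$ when $j\neq k$.

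The step I expect to require the most care is the bookkeeping that turns these mixed second moments into the clean matrix identity. The key observation is that subtracting $\mathbb E Y_i\,\mathbb E Y_{i'}$ (or $(\mathbb E Y_i)^2$ on the diagonal) reduces $M_{jk} - \mathbb E X_j\,\mathbb E X_k$ to precisely the $(j,k)$ entry of $\DispM(\mathbf X)$: off the diagonal this is $\Cov(X_j,X_k)$, while on the diagonal $\mathbb E X_j(X_j-1) - (\mathbb E X_j)^2 = \Var(X_j) - \mathbb E X_j = \Disp(X_j)$. This is exactly where the paper's choice of dispersion over variance is doing the work — with the variance, the diagonal would carry an extra $\mathbb E X_j$ term and the tidy form $\mathsf A\,\DispM(\mathbf X)\,\mathsf A^\top$ would break. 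Once this identification is made, both diagonal and off-diagonal entries read off as $(\mathsf A\,\DispM(\mathbf X)\,\mathsf A^\top)_{ii'}$, giving $\DispM(\mathbf Y) = \mathsf A\,\DispM(\mathbf X)\,\mathsf A^\top$ and completing the proof.
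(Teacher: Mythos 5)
Your proof is correct and takes essentially the same route as the paper's: establish the FMGF identity $\Phi_{\mathbf Y}(\mathbf t) = \Phi_{\mathbf X}(\mathsf A^\top \mathbf t)$ first, via the regrouping over $j$ and conditional independence of the $\mathbf Z^{(j)}$ given $\mathbf X$, then extract parts 1 and 2 by differentiating at $\mathbf t = \mathbf 0$. The only difference is that you explicitly carry out the final bookkeeping identifying $M_{jk} - \mathbb E X_j\,\mathbb E X_k$ with the entries of $\DispM(\mathbf X)$, a step the paper compresses into ``and the results for the dispersions and covariances follow.''
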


These properties act as discrete equivalents of standard results of the linear transformation $\mathbf Y = \mathsf A \mathbf X$ of a real-valued random variable $\mathbf X$: the expectation is $\mathbb E \mathbf Y = \mathsf A \,\mathbb E \mathbf X$, the variance--covariance matrix is $\VarM( \mathbf Y) = \mathsf A \VarM(\mathbf X)\, \mathsf A^\top$, and the multivariate moment generating function is $M_{\mathbf Y}(\mathbf t) = M_{\mathbf X}(\mathsf A^\top \mathbf t)$

\begin{proof}
The proof is similar to that of Theorem \ref{th:prop}. We start with part 3. We have
\begin{align*}
\Phi_\mathbf{Y}(\mathbf t) = \mathbb E \prod_{i=1}^c (1+t_i)^{Y_i}
  &= \mathbb E \prod_{i=1}^c (1+t_i)^{Z^{(1)}_i + \cdots + Z^{(d)}_i} \\
  &= \mathbb E \prod_{i=1}^c \prod_{j=1}^d (1+t_i)^{Z^{(j)}_i} \\
  &= \mathbb E \prod_{j=1}^d \mathbb E\left( \left.\prod_{i=1}^c (1+t_i)^{Z^{(j)}_i} \;\right|\; \mathbf X \right)  \\
  &= \mathbb E \prod_{j=1}^d \big(1 + \mathbf a_j^\top \mathbf t\big)^{X_j} ,
\end{align*}
where we have used conditional independence of the $\mathbf Z^{(i)}$ given $\mathbf X$. But this is precisely the FMGF $\Phi_{\mathbf X}(\mathbf s)$ of $\mathbf X$ evaluated at $\mathbf s$ where $s_j = \mathbf a_j^\top \mathbf t$, so $\mathbf s = \mathsf A^\top \mathbf t$.

We can now use this result to prove parts 1 and 2. For part 1 we have
\[  \mathbb EY_i = \frac{\partial}{\partial t_i} \Phi_\mathbf Y(\mathbf t) \Big|_{\mathbf t = \mathbf 0} = \frac{\partial}{\partial t_i} \Phi_\mathbf X \big(\mathbf A^\top \mathbf t\big) \Big|_{\mathbf t = \mathbf 0} = \sum_{j=1}^d a_{ij} \, \frac{\partial}{\partial t_j} \Phi_X(\mathbf t)\Big|_{\mathbf t = \mathbf 0} = \sum_{j=d}^c a_{ij} \, \mathbb EX_j . \]
For part 2, the same method with double differentiation shows
\begin{align*}
\mathbb EY_i(Y_i-1) &= \sum_{k \neq l} a_{ik} a_{il} \, \mathbb EX_k X_l
+ \sum_{k = 1}^d a_{ik}^2 \, \mathbb EX_k (X_k - 1) \\
\mathbb EY_iY_j &= \sum_{k \neq l} a_{ik} a_{jl} \, \mathbb EX_k X_l
+ \sum_{k = 1}^d a_{ik} a_{jk} \, \mathbb EX_k (X_k - 1) ,
\end{align*}
and the results for the dispersions and covariances follow.
\end{proof}

\bibliographystyle{abbrvurl}
\bibliography{bibliography}

\end{document}